\newtheorem{theorem}{Theorem}[section]
\newtheorem{proposition}{Proposition}[section]
\newtheorem{definition}{Definition}[section]
\newtheorem{corollary}{Corollary}[section]
\newtheorem{remark}{Remark}[section]
\begin{document}

\title{\Large Association Scheme on Triples from the Unitary Group}

\date{}

\author{\normalsize JOSE MARIA P. BALMACEDA\\
\small Institute of Mathematics\\ 
 \small University of the Philippines Diliman\\
\small E-mail: \texttt{jpbalmaceda@up.edu.ph}\\ \\ 
\normalsize DOM VITO A. BRIONES\\
\small Institute of Mathematics\\ 
\small University of the Philippines Diliman\\
\small E-mail: \texttt{dvbriones@math.upd.edu.ph}\\ \\
\normalsize JORIS N. BULORON\(^*\)\\
\small Mathematics Department\\ 
\small Cebu Normal University\\
\small E-mail: \texttt{jorisbuloron@yahoo.com}\\
\small \(^*\) Corresponding Author}

\maketitle
\begin{center}
{\bf Abstract}
\end{center}
\indent \indent 
An association scheme on triples (AST) is a three-dimensional analogue of a classical association scheme. 
Since much of the algebraic and combinatorial properties of ASTs remain unknown, it is natural to determine the structure constants of ASTs that are constructed similarly to classical counterparts.
Similar to how a transitive group action produces a Schurian classical association scheme, a two-transitive group action produces an AST.
The parameters of several such ASTs are known; however, the intersection numbers of the ASTs from the finite unitary group remained undetermined.
In this paper, the parameters of the AST from the finite unitary group are obtained through explicit descriptions of the AST's relations and certain equations on the underlying field.
In particular, we observe that an adjacency hypermatrix of this AST generates a one-dimensional ternary algebra.


\smallskip

{\small
\noindent{\bf Keywords:} association scheme on triples, unitary groups, isotropic lines\\
\noindent{\bf MSC 2020:} 05E30
}

\normalsize

\section{Introduction}

A \textbf{classical association scheme} on a set \(\Omega\) is a partition of \(\Omega \times \Omega\) satisfying certain regularity properties \cite{eb}.
By providing a single framework with which to study several discrete structures, classical association schemes have proven useful in the study of combinatorial objects such as graphs, codes, and designs.

Introduced in \cite{mb} as a higher-dimensional analogue of a classical association scheme, an \textbf{association scheme on triples (AST)} on a set \(\Omega\) is a partition of \(\Omega \times \Omega \times \Omega\) satisfying certain symmetry requirements.
In that paper, the authors demonstrated several relationships between ASTs and known combinatorial structures, including classical association schemes, 2-designs, and two-graphs. 
Despite being introduced in 1990 and the various avenues open to exploration, the area is still in its nascency, with only a few papers published since its conception.
Several such papers appeared recently, including \cite{Praeger-2021-CirculantAST}, \cite{Suda-2025-ASTHadamard}, \cite{bb}, and \cite{Balmaceda-2023-survey-fusion}.

Still, little is known about the algebraic or combinatorial structure of ASTs. 
To this end, it is natural to consider the structural parameters of families of ASTs that admit well-understood automorphism groups.  
One such approach was taken in \cite{Praeger-2021-CirculantAST}, where ASTs under consideration were invariant under some transitive cyclic subgroup. 
In this paper, we instead continue an approach initiated in \cite{mb}, and furthered in \cite{bb}.
Similar to how classical association schemes arise from transitive group actions, \cite{mb} shows that ASTs arise naturally from two-transitive group actions. 
As there are few major families of two-transitive groups (as listed in \cite{dm}), and since this situation mirrors the familiar classical case, it is a natural goal to determine the structural constants of these ASTs.
The parameters of several families of such ASTs were obtained in \cite{bb}. 
Moreover, among the ASTs from the broadest classes of two-transitive groups, only the parameters of the ASTs from the Suzuki, Ree, and projective unitary groups remain undetermined.
%
We continue the work in \cite{bb} by leveraging the matrix representation of the unitary groups to describe explicitly the relations and intersection numbers of the ASTs these groups produce.
In order to afford convenient descriptions of the AST parameters, we differ slightly from the approach to the computations that was taken in \cite{bb}. 
The initial approach is similar, leveraging a convenient basis for the unitary space and the projective transformations. 
However, our method diverges when we phrase the necessary and sufficient conditions for an intersection number to be nonzero in terms of certain inclusions in the fixed field.
This yields the intersection numbers described in Theorem \ref{thm:main}.
In particular, we find that a one-dimensional ternary subalgebra is generated by a particular adjacency hypermatrix.

\medskip

\noindent{\textbf{Theorem \ref{thm:main}}.} {\it Let \(q\) be a prime power, $U(3,q^{2})$ be the unitary group of \(3\times 3\) matrices over \(GF(q^2)\), $\mathbb{P}C$ be the set of isotropic lines in the 3-dimensional vector space over \(GF(q^2)\), and $X$ be the AST from the $2$-transitive action of $U(3,q^{2})$ on $\mathbb{P}C$.
Then 
the intersection numbers of \(X\) are as listed in Table \ref{tab:intnum}.}




\section{Preliminaries}

\subsection{Association Schemes on Triples}

Following \cite{mb}, we define an association scheme on triples as follows.

\begin{definition}
Let $\Omega$ be a finite non-empty set with at least 3 elements. 
An \textbf{{association scheme on triples} (AST)} on $\Omega$ is a partition $\left\{R_{0},R_{1},R_{2},R_{3},\ldots,R_{m}\right\}$ of $\Omega\times\Omega\times\Omega$ such that $m\geq 4$ and the following conditions hold:
\begin{enumerate}
\item[i.] For any $i\in\left\{0,1,2,3,\ldots,m\right\}$, there exists a non-negative integer $n_{i}$ such that for any distinct elements $x$ and $y$ in $\Omega$, 
$$n_{i}=\left|\left\{\omega\in\Omega:(x,y,\omega)\in R_{i}\right\}\right|.$$
\item[ii.] For any $i,j,k,\ell\in\left\{0,1,2,3,\ldots,m\right\}$, there exists a non-negative integer $p_{ijk}^{\ell}$ such that for any $(x,y,z)\in R_{\ell}$,
$$p_{ijk}^{\ell}=\left|\left\{\omega\in\Omega:(\omega,y,z)\in R_{i},(x,\omega,z)\in R_{j},(x,y,\omega)\in R_{k}\right\}\right|.$$
\item[iii.] For any $i\in\left\{0,1,2,3,\ldots,m\right\}$ and any permutation $\sigma$ of the set $\left\{1,2,3\right\}$, there exists  $j\in\left\{0,1,2,3,\ldots,m\right\}$ such that
$$R_{j}=\left\{(x_{\sigma(1)},x_{\sigma(2)},x_{\sigma(3)}):(x_{1},x_{2},x_{3})\in R_{i}\right\}.$$
\item[iv.] 
The trivial relations $R_{0}$, $R_{1}$, $R_{2}$, and $R_{3}$ are the following:
\begin{align*}
 R_{0}&=\left\{(x,x,x):x\in\Omega\right\},\\
 R_{1}&=\left\{(y,x,x):x,y\in\Omega, x\neq y\right\},\\
 R_{2}&=\left\{(x,y,x):x,y\in\Omega, x\neq y\right\},\\
 R_{3}&=\left\{(x,x,y):x,y\in\Omega, x\neq y\right\}.
 \end{align*}
\end{enumerate}
\label{def:ast}
\end{definition}

The integer $n_i$ is called the {\bf third valency} (or simply {\bf valency}) of the ternary relation $R_{i}$. 
The integer $p_{ijk}^{\ell}$ is called the {\bf intersection number} with respect to the relations $R_{i},R_{j},R_{k}$ and $R_{\ell}$. For ease of discussion, define the set
$$\Gamma_{ijk}^{\ell}(x,y,z):=\left\{w\in \Omega:(w,y,z)\in R_{i},(x,w,z)\in R_{j},(x,y,w)\in R_{k}\right\},$$ for any indices $i,j,k,\ell\in\{0,\ldots,m\}$ and any $(x,y,z)\in R_{\ell}$.
By construction, $p_{ijk}^{\ell}=\left|\Gamma_{ijk}^{\ell}(x,y,z)\right|$.
The following remark provides some readily computed parameters.

\begin{remark}[Proposition \(2.7\), \cite{mb}]\label{rem_given-int-nums}
 Suppose $X=\left\{R_{0},R_{1},R_{2},R_{3},\ldots R_{m}\right\}$ is an arbitrary AST on a set $\Omega$ for some $m\geq 4$. 
 The following hold:
\begin{center}
\begin{tabular}{|c|} \hline
 $n_{0}=n_3=0$ \\ \hline
 $n_{1}=n_2=1$\\ \hline
 $p_{000}^{0}=1$ \\ \hline
 $p_{123}^{0}=\left|\Omega\right|-1$\\ \hline
 $p_{011}^{1}=p_{202}^{2}=p_{330}^{3}=1$\\ \hline
 $p_{132}^{1}=p_{321}^{2}=p_{213}^{3}=1$\\ \hline
$p_{ijk}^{1}=p_{ijk}^{2}=p_{ijk}^{3}=0$, \text{where} $i,j,k\geq 4$ \\ \hline
 $p_{3\ell 1}^{\ell}=p_{\ell 32}^{\ell}=p_{21\ell}^{\ell}=1$, \text{where} $\ell\geq 4$\\ \hline
\end{tabular} 
\end{center}   
\end{remark} 

Since several other intersection numbers are readily determined, the following intersection numbers are the ones of interest.
\begin{center}
\begin{tabular}{|c|} \hline
 $p_{1jk}^{1}$, $p_{i2k}^{2}$, $p_{ij3}^{3}$, where $i,j,k\geq 4$\\ \hline
 $p_{ijk}^{\ell}$, where $i,j,k,\ell\geq 4$ \\ \hline
\end{tabular} 
\end{center}

\subsection{Algebra of Hypermatrices}

To interpret the intersection numbers of an AST algebraically, we view ASTs as hypermatrix ternary algebras \cite{mb}. 
Let \(X=\{R_i\}_{i=0}^m\) be an AST on a set \(\Omega\). 
For each \(i\in\{0,\ldots,m\}\), we define an \(|\Omega|\times |\Omega|\times |\Omega|\) cubic hypermatrix \(A_i\) whose entries are indexed by \(\Omega\). 
The \((x,y,z)\)-entry of \(A_i\) is
\[(A_i)_{xyz}=
\begin{cases}
    1, & \text{if }(x,y,z)\in R_i, \text{ and }\\
    0, & \text{if }(x,y,z) \notin R_i.
\end{cases}\]
The linear space \(\operatorname{Span}_\mathbb{C}(\{A_i\}_{i=0}^m)\) of the adjacency hypermatrices is a complex vector space under scalar multiplication and entry-wise addition.
To equip the span with a ternary algebra structure, define for each $A,B,C\in \operatorname{Span}_\mathbb{C}(\{A_i\}_{i=0}^m)$ the product \(ABC\) given by $$(ABC)_{xyz}=\displaystyle\sum^{}_{w\in\Omega}{(A)_{wyz}(B)_{xwz}(C)_{xyw}}.$$

The theorem below shows that the intersection numbers of \(X\) are the structure constants of a ternary algebra.

\begin{theorem}[{Theorem \(1.4\) and Corollary 2.8, \cite{mb}}]\label{thm:ternary-subalgebra}
Let \(X=\{R_i\}_{i=0}^m\) be an AST with corresponding adjacency hypermatrices \(\{A_i\}_{i=0}^m\). 
Then \(\operatorname{Span}_\mathbb{C}(\{A_i\}_{i=0}^m)\) is a ternary algebra satisfying
\(A_{i}A_{j}A_{k}=\displaystyle\sum^{m}_{\ell=0}{p_{ijk}^{\ell}A_{\ell}},\)
for any $i,j,k\in\left\{0,1,\ldots,m\right\}$.
Moreover, \(\operatorname{Span}_\mathbb{C}(\{A_i\}_{i=4}^m)\) is a ternary subalgebra of \(\operatorname{Span}_\mathbb{C}(\{A_i\}_{i=0}^m)\).
\end{theorem}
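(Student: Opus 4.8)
The plan is to prove both assertions by a single direct computation of the ternary product on the basis hypermatrices, reading off the $(x,y,z)$-entries from the defining formula and matching them against the claimed structure constants via the partition property. For the subalgebra, I would then identify which of these constants vanish.

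First I would fix indices $i,j,k\in\{0,\ldots,m\}$ and compute the $(x,y,z)$-entry of $A_iA_jA_k$ straight from the definition of the product. Since each of $(A_i)_{wyz}$, $(A_j)_{xwz}$, $(A_k)_{xyw}$ is a $0/1$ indicator, the summand $(A_i)_{wyz}(A_j)_{xwz}(A_k)_{xyw}$ equals $1$ precisely when $(w,y,z)\in R_i$, $(x,w,z)\in R_j$, and $(x,y,w)\in R_k$ hold simultaneously, and is $0$ otherwise. Hence
\[(A_iA_jA_k)_{xyz}=\bigl|\{w\in\Omega:(w,y,z)\in R_i,\ (x,w,z)\in R_j,\ (x,y,w)\in R_k\}\bigr|=\bigl|\Gamma_{ijk}^{\ell}(x,y,z)\bigr|,\]
where $\ell$ is the unique index with $(x,y,z)\in R_\ell$, uniqueness being guaranteed because $\{R_0,\ldots,R_m\}$ partitions $\Omega^3$. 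By condition (ii) of Definition \ref{def:ast}, this count depends only on $\ell$ and equals $p_{ijk}^\ell$. On the other hand, $\bigl(\sum_{\ell=0}^m p_{ijk}^\ell A_\ell\bigr)_{xyz}=\sum_{\ell=0}^m p_{ijk}^\ell(A_\ell)_{xyz}=p_{ijk}^{\ell}$ with the same $\ell$, since exactly one indicator $(A_\ell)_{xyz}$ is nonzero. The two expressions agree entrywise, so $A_iA_jA_k=\sum_{\ell=0}^m p_{ijk}^\ell A_\ell$. Because the product formula is manifestly linear in each of its three arguments, this identity extends by trilinearity to all of $\operatorname{Span}_\mathbb{C}(\{A_i\}_{i=0}^m)$ and exhibits the span as closed under the product; thus it is a ternary algebra with the $p_{ijk}^\ell$ as structure constants.

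For the subalgebra claim, I would invoke the identity just established: for $i,j,k\geq 4$ one has $A_iA_jA_k=\sum_{\ell=0}^m p_{ijk}^\ell A_\ell$, so it suffices to show $p_{ijk}^\ell=0$ for every $\ell\in\{0,1,2,3\}$. For $\ell\in\{1,2,3\}$ this is exactly the vanishing $p_{ijk}^1=p_{ijk}^2=p_{ijk}^3=0$ recorded in Remark \ref{rem_given-int-nums}. For $\ell=0$ I would argue structurally: the explicit descriptions of $R_0,R_1,R_2,R_3$ in condition (iv) together exhaust all triples in which at least two coordinates coincide, so every triple in $R_i$ with $i\geq 4$ has three pairwise distinct coordinates. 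Taking any $(x,y,z)\in R_0$, i.e.\ $x=y=z$, the membership $(w,y,z)=(w,x,x)\in R_i$ is then impossible for $i\geq 4$, since its last two coordinates are equal. Hence $\Gamma_{ijk}^0(x,y,z)=\emptyset$ and $p_{ijk}^0=0$. Consequently $A_iA_jA_k=\sum_{\ell=4}^m p_{ijk}^\ell A_\ell\in\operatorname{Span}_\mathbb{C}(\{A_i\}_{i=4}^m)$, which gives closure and hence the subalgebra.

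The computation is routine, and I do not expect a genuine obstacle beyond keeping the three index positions straight. The only real content lies in two places. The first is recognizing that condition (ii) is precisely the assertion that the entry $(A_iA_jA_k)_{xyz}$ is constant on each relation $R_\ell$; without it the product would not generally be a combination of the $A_\ell$ at all, so this is where the AST axioms are essential rather than incidental. The second is the structural observation that the high-index relations contain only triples with distinct coordinates, which is what forces $p_{ijk}^0=0$, the one required vanishing not already supplied by Remark \ref{rem_given-int-nums}.
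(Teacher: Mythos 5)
Your proof is correct. Note, however, that the paper does not prove this statement at all: it is quoted as Theorem 1.4 and Corollary 2.8 of the Mesner--Bhattacharya reference \cite{mb}, so there is no internal proof to compare against. Your argument is the standard one and is complete: the entrywise computation showing $(A_iA_jA_k)_{xyz}=|\Gamma_{ijk}^{\ell}(x,y,z)|=p_{ijk}^{\ell}$ (with $\ell$ the unique index of the relation containing $(x,y,z)$) is exactly where axiom (ii) enters, and your structural observation that every relation $R_i$ with $i\geq 4$ consists of triples with pairwise distinct coordinates correctly supplies the one vanishing, $p_{ijk}^{0}=0$, that Remark \ref{rem_given-int-nums} does not list (the cases $\ell\in\{1,2,3\}$ being covered by that remark, though the same distinct-coordinates argument would handle them too). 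The only cosmetic point is that closure of the span under the ternary product follows immediately from trilinearity of the product formula, as you say, so the "ternary algebra" claim carries no content beyond the structure-constant identity itself.
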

In particular, Theorem \ref{thm:ternary-subalgebra} suggests that the most interesting intersection numbers are \(p_{ijk}^\ell\) for \(i,j,k,l\geq 4\).

\subsection{ASTs from Two-Transitive Groups}

Let $G$ be a group acting on a set $\Omega$. We say that $G$ is \textbf{$\mathbf{2}$-transitive} on $\Omega$ if for any $a,b,c,d\in\Omega$ with $a\neq b$ and $c\neq d$, there exists $x\in G$ such that $a^x=c$ and $b^x=d$. 
We write $a^G$ and $G_{a}$ for the \textbf{orbit} and the \textbf{stabilizer} of an element \(a\in G\), respectively.

We state and outline a proof of the following theorem from \cite{mb}, thereby including details on the valencies and intersection numbers.

\begin{theorem}[{Theorem \(4.1\), \cite{mb}}]
Let $G$ be a group acting $2$-transitively on a set $\Omega$, and let \(\Omega^3\) denote \(\Omega\times\Omega\times\Omega\). Define the action
\begin{align*}
  \Omega^{3}\times G&\longrightarrow \Omega^{3}\\  
(a,b,c)&\xmapsto{\phantom{..}x\phantom{..}}{}(a^{x},b^{x},c^{x})
\end{align*}
of \(G\) on \(\Omega^3\).
The set of orbits of this action on $\Omega^3$ is an AST on $\Omega$. 
\label{ref:thm1}
\end{theorem}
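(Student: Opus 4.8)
The plan is to verify the four defining conditions of Definition \ref{def:ast} directly, using the single structural fact that each $R_i$ is a $G$-orbit — hence $G$-invariant — together with the transitivity properties of the action. Throughout, the crucial mechanism is that applying any $g\in G$ coordinatewise permutes each relation $R_i$ onto itself, so that the map $\omega\mapsto\omega^g$ restricts to bijections between the various counting sets appearing in the definition.

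First, I would identify the trivial relations (condition iv). Since $G$ is $2$-transitive it is transitive, so the diagonal $\{(x,x,x)\}$ is a single orbit, which we call $R_0$. For $R_1=\{(y,x,x):x\neq y\}$, given two such triples $(y,x,x)$ and $(y',x',x')$, $2$-transitivity supplies $g\in G$ with $x^g=x'$ and $y^g=y'$, whence $(y,x,x)^g=(y',x',x')$; thus $R_1$ is one orbit, and the same argument handles $R_2$ and $R_3$. Because $|\Omega|\geq 3$, there is a triple of pairwise-distinct elements lying in none of $R_0,\dots,R_3$, so at least one further orbit exists and $m\geq 4$.

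Next, for the valency condition (i), fix a relation $R_i$ and two pairs of distinct elements $(x,y)$ and $(x',y')$. By $2$-transitivity choose $g$ with $x^g=x'$ and $y^g=y'$; since $R_i$ is $G$-invariant, $\omega\mapsto\omega^g$ is a bijection from $\{\omega:(x,y,\omega)\in R_i\}$ onto $\{\omega:(x',y',\omega)\in R_i\}$, so these sets have equal size and $n_i$ is well-defined. The intersection-number condition (ii) is analogous but uses only transitivity on the orbit $R_\ell$: given $(x,y,z),(x',y',z')\in R_\ell$, there is $g$ with $(x,y,z)^g=(x',y',z')$, and $G$-invariance of $R_i,R_j,R_k$ makes $\omega\mapsto\omega^g$ a bijection $\Gamma_{ijk}^{\ell}(x,y,z)\to\Gamma_{ijk}^{\ell}(x',y',z')$, so $p_{ijk}^{\ell}$ is independent of the chosen representative of $R_\ell$.

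Finally, for the symmetry condition (iii), I would observe that each permutation $\sigma$ of $\{1,2,3\}$ acts on $\Omega^3$ by permuting coordinates, and this action commutes with the diagonal $G$-action. Hence $\sigma$ carries $G$-orbits to $G$-orbits, so the set $\{(x_{\sigma(1)},x_{\sigma(2)},x_{\sigma(3)}):(x_1,x_2,x_3)\in R_i\}$ is again an orbit and therefore equals some $R_j$. The only place where full $2$-transitivity, rather than mere transitivity, is genuinely needed is the valency condition (i); everything else follows from the interplay between $G$-invariance of the orbits and the commuting coordinate-permutation action. I expect no serious obstacle beyond bookkeeping, since all four conditions reduce to the observation that the $G$- and coordinate-permutation actions preserve the relevant counting sets.
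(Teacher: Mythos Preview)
Your proposal is correct and follows essentially the same approach as the paper's outline: both verify condition~(iv) via $2$-transitivity, condition~(i) by producing a bijection $\omega\mapsto\omega^g$ from $2$-transitivity on pairs, condition~(ii) by the same bijection coming from the orbit $R_\ell$, and condition~(iii) from the commutation of coordinate permutation with the diagonal $G$-action. One small inaccuracy in your closing commentary: $2$-transitivity is also genuinely needed for condition~(iv), not just~(i), since without it the sets $R_1,R_2,R_3$ could split into several orbits---but your actual argument for~(iv) already uses it correctly.
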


\begin{proof}[Outline of proof.]
 Suppose $\left\{R_{0},R_{1},R_{2},R_{3},\ldots,R_{m}\right\}$ is the set of distinct orbits of the action of $G$ on $\Omega^3$.
Since Conditions ($iii$) and ($iv$) of Definition \ref{def:ast} follow directly from the two-transitivity of \(G\), we may assume that $R_{0},R_{1},R_{2}$, and $R_{3}$ are the trivial relations described in Definition \ref{def:ast}.   
To verify Condition (\(i\)) of Definition \ref{def:ast}, take any \(a\neq b\) and \(c\neq d\).
Choose \(x\) such that $(a^{x},b^{x})=(c,d)$. 
We then obtain the bijection 
\begin{align*}
\left\{e\in \Omega:(a,b,e)\in R_{i}\right\}&\longrightarrow\left\{f\in \Omega:(c,d,f)\in R_{i}\right\}\\   
e&\longmapsto e^x.
\end{align*}
To verify condition ($ii$) of Definition \ref{def:ast}, let $i,j,k,\ell\in\left\{0,1,\ldots,m\right\}$. 
Let $(a,b,c)$ and $(x,y,z)$ be in $R_{\ell}$. 
There exists $g\in G$ such that $(x,y,z)^{g}=(a,b,c)$.
The map $w\mapsto w^g$ from $\Gamma_{ijk}^{\ell}(x,y,z)$ into $\Gamma_{ijk}^{\ell}(a,b,c)$ is a bijection.  
\end{proof}

\begin{remark}[Remark 2.10 \cite{bb}, Lemma 4.2 \cite{mb}]\label{rem:AST-valencies-and-2-pt-stab}
Let \(G\) be a two-transitive group acting on a set \(\Omega\), and \(X\) be the AST from \(G\) as constructed in Theorem \ref{ref:thm1}. 
The verification of Condition $(ii)$ of Definition \ref{def:ast} in the above proof shows that the nontrivial relations of \(X\) are in correspondence with the orbits of any two-point stabilizer. 
In fact, it also shows that the valencies are the sizes of these orbits.    
\end{remark}

\subsection{AST from the Unitary Group}
 Let \(q\) be a prime power, \(F=GF(q^2)\), and \(\overline{c}=c^q\) for each \(c\in F\). For any subfield \(K\) of \(F\), let \(K^\times\) denote the multiplicative \textbf{group of units} of \(K\).
Further, let $F_{0}=\left\{r\in GF(q^2):r^q=r\right\}$ be the order \(q\) subfield of \(F\), and let \(V=F\times F\times F\) be the usual 3-dimensional vector space over \(F\). 
Define \(B:V\times V\longrightarrow V\) by \[B(x,y)=x_1\overline{y_2}+x_2\overline{y_1}+x_3\overline{y_3},\]
for each \(x=(x_1,x_2,x_3),y=(y_1,y_2,y_3)\in V\).
Since \(B\) is linear in the first variable and $\overline{B(x,y)}=B(y,x)$ for any \(x,y\in V\), the mapping \(B\) is a \textbf{Hermitian form} on \(V\) \cite{g}. 
In fact, \(B\) is non-degenerate; i.e. if \(x\in V\) satisfies \(B(x,y)=0\) for all \(y\in V\), then \(x=0\).

Let $U(3,q^2)$ be the group of all linear automorphisms $\sigma:V\rightarrow V$ such that 
$$B(\sigma(x),\sigma(y))=B(x,y).$$
Then $U(3,q^2)$ is called the {\bf unitary group} of $V$. Letting \(Z(U(3,q^2))\) denote the center of \(U(3,q^2)\), the quotient group $PGU(3,q^2)=\frac{U(3,q^2)}{Z(U(3,q^2))}$ is called the {\bf projective unitary group}.

Let \(x\in V\) be a nonzero vector and let \([x]\) be the one-dimensional subspace of \(V\) spanned by \(x\). If \(B(x,x)=0\), then we say that \(x\) is an \textbf{isotropic vector} and that \([x]\) is an \textbf{isotropic line}.
The set \(\mathbb{P}C\) of all isotropic lines is called the {\bf projective Hermitian quadratic cone}.
 A subspace $W$ of $V$ is {\bf totally isotropic} if every nonzero vector in $W$ is isotropic. 
 The dimension of a maximal totally isotropic subspace is called the {\bf Witt index} of $V$. 
 It can be shown that \(V\) has a Witt index of \(1\), and that \(V\) has a basis of vectors \(u,v,w\in V\) satisfying \(B(u,v)=1=B(w,w)\) and \(B(u,u)=0=B(v,v)\). 
 We fix and utilize such a basis \(\{u,v,w\}\) throughout the article.
 

\section{Parameters of the AST from the Unitary Group}

The group $U(3,q^{2})$ acts two-transitively on $\mathbb{P}C$ via $\sigma[x]=[\sigma(x)]$ for $\sigma\in U(3,q^{2})$ \cite{g}. 
It can be shown that the kernel of this action is $Z(U(3,q^{2}))$, so that $PGU(3,q^{2})$ acts faithfully and two-transitively on $\mathbb{P}C$. 
Let \(\mathbb{P}C^3=\mathbb{P}C\times \mathbb{P}C\times \mathbb{P}C\) and define the following action
\begin{align*}
     \mathbb{P}C^3\times U(3,q^{2})&\longrightarrow \mathbb{P}C^3 \\
    ([x],[y],[z])&\xmapsto{\phantom{..}\sigma\phantom{..}}{}([\sigma(x)],[\sigma(y)],[\sigma(z)])
\end{align*}
of \(U(3,q^2)\) on \(\mathbb{P}C^3\).
By Theorem \ref{ref:thm1}, the set $X$ of orbits of this action forms an association scheme on triples AST on \(\mathbb{P}C\).
We compute the intersection numbers of this AST in this section. 
Through these computations, we recover the number of relations and the valencies of this AST, initially obtained in \cite{bb}. 

\begin{theorem}[Table \(1\), \cite{bb}]
Let $X$ be the AST from the $2$-transitive action of $U(3,q^{2})$. Then the number of relations of $X$ is $q+5$.
\end{theorem}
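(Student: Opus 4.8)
The plan is to count the relations via a two-point stabilizer, invoking Remark \ref{rem:AST-valencies-and-2-pt-stab}. Since $U(3,q^2)$ acts $2$-transitively, the four trivial relations $R_0,\dots,R_3$ already exhaust every triple having a repeated coordinate, so each nontrivial relation is an orbit on triples of pairwise distinct isotropic lines. Fixing representatives whose first two entries are a chosen pair $[u],[v]$, these orbits correspond bijectively to the orbits of the two-point stabilizer $G_{[u],[v]}$ on $\mathbb{P}C\setminus\{[u],[v]\}$. Thus it suffices to show this stabilizer has exactly $q+1$ orbits on the remaining $q^3-1$ isotropic lines, whence the total number of relations is $4+(q+1)=q+5$.

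First I would pin down the stabilizer explicitly in the fixed basis $\{u,v,w\}$, taking $[u]=[1:0:0]$ and $[v]=[0:1:0]$. Writing $\sigma(u)=\alpha u$, $\sigma(v)=\beta v$, and $\sigma(w)=c_1u+c_2v+c_3w$, and imposing $B(\sigma(\cdot),\sigma(\cdot))=B(\cdot,\cdot)$ on the relevant basis pairs, I expect to force $c_1=c_2=0$, $\alpha\overline{\beta}=1$, and $c_3\overline{c_3}=1$. Hence $G_{[u],[v]}$ is the image of the diagonal torus $\{\operatorname{diag}(\alpha,\overline{\alpha}^{-1},\gamma):\alpha\in F^\times,\ \gamma\overline{\gamma}=1\}$, acting on a line $[x_1:x_2:x_3]$ by coordinatewise scaling.

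Next I would analyze the orbits on the isotropic lines, which satisfy $x_1\overline{x_2}+\overline{x_1}x_2+x_3\overline{x_3}=0$. For $x_3\neq 0$, I normalize $x_3=1$; modulo rescaling the action becomes $(x_1,x_2)\mapsto(ax_1,\overline{a}^{-1}x_2)$ with $a\in F^\times$ arbitrary, under which $s=x_1\overline{x_2}$ is invariant and, I would verify, a \emph{complete} invariant. The isotropy condition then reads $s+\overline{s}=-1$, whose solution set is a single fiber of the additive map $s\mapsto s+\overline{s}$ from $F$ onto $F_0$ and hence has exactly $q$ elements, giving $q$ orbits. For $x_3=0$ the two excluded lines are precisely the coordinate points $[u],[v]$, and the remaining lines $[1:x_2:0]$ with $x_2+\overline{x_2}=0$, $x_2\neq0$, form a single orbit because $\beta/\alpha=(\alpha\overline{\alpha})^{-1}$ ranges over all of $F_0^\times$, which acts transitively on the nonzero trace-zero elements. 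This yields $q+1$ orbits, and the theorem follows; as a consistency check the orbit sizes satisfy $q(q^2-1)+(q-1)=q^3-1=|\mathbb{P}C|-2$.

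The main obstacle I anticipate is the orbit analysis on the open stratum $x_3\neq0$: one must identify the correct invariant $s=x_1\overline{x_2}$ and verify that equal values of $s$ genuinely force membership in one orbit, not merely that $s$ is invariant, and then count the solutions of $s+\overline{s}=-1$ correctly as a trace fiber of size $q$. Treating the degenerate stratum $x_3=0$ separately, and confirming it contributes exactly one additional orbit rather than merging with or fragmenting the generic ones, is the other place where care is needed.
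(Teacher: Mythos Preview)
Your proposal is correct and shares the paper's high-level strategy: invoke Remark \ref{rem:AST-valencies-and-2-pt-stab}, split $\mathbb{P}C\setminus\{[u],[v]\}$ according to whether the $w$-coefficient vanishes, and show the two strata contribute $1$ and $q$ orbits respectively. The execution differs, however. The paper normalises the $v$-coefficient to $1$ (writing generic lines as $[bu+v+cw]$), asserts that two such lines lie in the same $U(3,q^2)_{[u],[v]}$-orbit iff their $b$-coordinates lie in the same coset of $F_0^\times$, and then obtains $q$ by dividing $|\Omega'|=q(q^2-1)$ (computed via the trace and norm maps) by the common orbit size $q^2-1$. You instead normalise the $w$-coefficient to $1$, compute the two-point stabiliser explicitly as the diagonal torus $\{\operatorname{diag}(\alpha,\overline{\alpha}^{-1},\gamma):\gamma\overline{\gamma}=1\}$, exhibit the complete invariant $s=x_1\overline{x_2}$, and count the $q$ solutions of $s+\overline{s}=-1$ directly as a fibre of the trace map. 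Your route is more self-contained in that the stabiliser and the separating invariant are made fully explicit, whereas the paper's normalisation, though it leaves the stabiliser implicit, is tailored to the subsequent intersection-number computations, which use the representatives $[b_iu+v+c_iw]$ throughout.
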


For our purposes, we require explicit forms for the nontrivial relations of this AST. 
To accomplish this, first define the sets 
\[\Omega_{0}:=\left\{[au+v]\in\mathbb{P}C:a\in F^\times,a+\overline{a}=0\right\},\] \
and
\[\Omega':=\left\{[bu+v+cw]\in\mathbb{P}C:b,c\in F,\; b+\overline{b}+c\overline{c}=0\neq b+\overline{b}\right\}.\]
It follows that \(\mathbb{P}C\) is the disjoint union of the sets \(\left\{[u],[v]\right\}\), \(\Omega_{0}\), and \(\Omega'\). 

By Remark \ref{rem:AST-valencies-and-2-pt-stab}, there is a correspondence between the orbits of $U(3,q^2)$ on $\mathbb{P}C^{3}$ and the orbits of $U(3,q^2)_{[u],[v]}$ on $\mathbb{P}C\setminus\left\{[u],[v]\right\}$. Since it can be verified that $U(3,q^2)_{[u],[v]}$ acts transitively on $\Omega_{0}$, it remains to consider the orbits of $U(3,q^{2})$ on $\Omega'$.
These are described in Proposition \ref{prop-UnitaryASTorbits}, affording us a workable description of the nontrivial relations.

\begin{proposition}\label{prop-UnitaryASTorbits}
The group $U(3,q^{2})_{[u],[v]}$ acts on $\Omega'$, and 
this action has \(q\) orbits.
\label{ref:prop1}
\end{proposition}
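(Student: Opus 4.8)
The plan is to pin down the two-point stabilizer $H := U(3,q^2)_{[u],[v]}$ as an explicit group of diagonal matrices, and then to exhibit a single $H$-invariant quantity on $\Omega'$ whose values are in bijection with the $H$-orbits; counting the admissible values will give exactly $q$.

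First I would determine $H$. Since the fixed basis satisfies $u = e_1$, $v = e_2$, $w = e_3$, any $\sigma \in H$ must act by $e_1 \mapsto \alpha e_1$ and $e_2 \mapsto \beta e_2$ for some $\alpha, \beta \in F^\times$, while $\sigma(e_3) = pe_1 + re_2 + se_3$. Imposing $B(\sigma(e_i),\sigma(e_j)) = B(e_i,e_j)$ over all pairs forces $r = 0$ (from $e_1,e_3$), $p = 0$ (from $e_2,e_3$), $\alpha\overline{\beta} = 1$, and $s\overline{s} = 1$. Hence
\[
H = \left\{\operatorname{diag}(\alpha,\overline{\alpha}^{-1},s) : \alpha \in F^\times,\ s^{q+1} = 1\right\},
\]
and since such maps keep the third coordinate nonzero they preserve $\Omega'$, which settles the first assertion.

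Next I would compute the action in normalized form. Each line of $\Omega'$ has a unique representative $(b,1,c)$ with $c \neq 0$ and $b + \overline{b} + c\overline{c} = 0$. Writing $N(\alpha) = \alpha\overline{\alpha}$ and rescaling the middle coordinate back to $1$, one gets
\[
\operatorname{diag}(\alpha,\overline{\alpha}^{-1},s)\cdot(b,1,c) = (N(\alpha)\,b,\ 1,\ \overline{\alpha}s\,c).
\]
Because $s\overline{s} = 1$, the third coordinate obeys $c\overline{c} \mapsto N(\alpha)\,c\overline{c}$, exactly matching the scaling of $b$; therefore $\mu := b/(c\overline{c})$ is $H$-invariant. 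The isotropy relation $b + \overline{b} = -c\overline{c}$ becomes $\mu + \overline{\mu} = -1$, and the set of $\mu \in F$ satisfying this is a coset of $\{x \in F : x + \overline{x} = 0\}$, hence has exactly $q$ elements.

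It remains to show that $\mu$ is a complete invariant attaining all $q$ values, which I regard as the heart of the argument. Attainment is immediate: for any $\mu$ with $\mu + \overline{\mu} = -1$ and any $c \in F^\times$, the point $(\mu c\overline{c},1,c)$ lies in $\Omega'$ with invariant $\mu$. For the converse, given $(b,1,c)$ and $(b',1,c')$ sharing the value $\mu$, I would solve for a witnessing $\sigma \in H$: the condition $N(\alpha) = b'/b$ imposed by the first coordinate coincides with the condition $N(\alpha) = c'\overline{c'}/(c\overline{c})$ forced by $s^{q+1} = 1$, precisely because the shared $\mu$ gives $b'/b = c'\overline{c'}/(c\overline{c})$; this common value lies in $F_0^\times$, so surjectivity of the norm map $N : F^\times \to F_0^\times$ supplies a suitable $\alpha$, and then $s = c'/(\overline{\alpha}c)$ automatically satisfies $s^{q+1} = 1$. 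Thus the fibers of $\mu$ are exactly the $H$-orbits, giving $q$ orbits. The main obstacle is this final step: verifying that the two a priori independent scaling requirements on $N(\alpha)$ are in fact one and the same, so that the surjectivity of the norm map makes every orbit reachable.
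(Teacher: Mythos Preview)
Your argument is correct, but it follows a different route from the paper's proof. The paper proceeds by counting: it computes $|\Omega'|=q(q^2-1)$ using the trace and norm maps, then asserts that $[bu+v+cw]$ and $[b_1u+v+c_1w]$ lie in the same $H$-orbit if and only if $bF_0^\times=b_1F_0^\times$, so that each orbit has size $|F_0^\times|(q+1)=q^2-1$, and divides to obtain $q$. You instead pin down $H$ explicitly as $\{\operatorname{diag}(\alpha,\overline{\alpha}^{-1},s):\alpha\in F^\times,\ s\overline{s}=1\}$, exhibit $\mu=b/(c\overline{c})$ as a complete $H$-invariant, and count its values via the affine constraint $\mu+\overline{\mu}=-1$. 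The two invariants are essentially the same object, since $c\overline{c}=-(b+\overline{b})\in F_0^\times$ means that $\mu$ is just $b$ normalized by a canonical element of its $F_0^\times$-coset; but your packaging has the advantage of being fully self-contained (you actually construct the witnessing $\sigma$ and verify $s^{q+1}=1$, whereas the paper leaves the coset characterization as an ``observe that''), while the paper's counting recovers the orbit sizes $q^2-1$ and $|\Omega'|$ along the way, which it immediately reuses for the valencies $n_i$.
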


\begin{proof} 
%
To obtain the number of orbits of \(U(3,q^{2})_{[u],[v]}\) on \(\Omega'\), we begin by determining \(|\Omega'|\). 
Since the trace map \(T:F\rightarrow F_{0}\) given by \(T(x)=x+\overline{x}\) is an additive epimorphism, we have
\begin{equation}\label{eq:orbit_type1_size}
    |\operatorname{Ker}{T}|=\frac{|F|}{|F_{0}|}=\frac{q^{2}}{q}=q.
\end{equation}
Moreover,
\[F\setminus\operatorname{Ker}{T} = \{b\in F^\times : b+\overline{b}\neq 0\}.\]
Since the norm map \(N:F^\times\rightarrow F_{0}^\times\) with \(N(r)=r\overline{r}\) is a multiplicative group epimorphism, we deduce that \(|\operatorname{Ker}{N}|=q+1\). For every 
\(b\in F\setminus\operatorname{Ker}{T}\), the set of all elements \(c\in F\) such that \(b+\overline{b}+c\overline{c}=0\) forms a coset of \(\operatorname{Ker}{N}\). Hence, there are exactly \(q+1\) such elements \(c\in F\) satisfying \(b+\overline{b}+c\overline{c}=0\). 

Thus,
$$|\Omega'|=|F\setminus\operatorname{Ker}{T}|(q+1)=(q^{2}-q)(q+1)=q(q^{2}-1).$$

Proceeding, observe that for any $[bu+v+cw],[b_{1}u+v+c_{1}w]\in \Omega'$, we have $[bu+v+cw]\in [b_{1}u+v+c_{1}w]^{U(3,q^{2})_{[u],[v]}}$ if and only if $(F_{0}^\times)b=(F_{0}^\times)b_{1}$. 
It follows that any orbit contains exactly 
\begin{equation}\label{eq:orbit_type2_size}
    |F_0^\times|(q+1)=(q-1)(q+1)=q^{2}-1
\end{equation}
isotropic lines. %
%
%
%
Therefore,
$$\frac{|\Omega'|}{q^{2}-1}=\frac{q(q^{2}-1)}{q^{2}-1}=q$$ 
is the number of orbits of $U(3,q^{2})_{[u],[v]}$ on $\Omega'$.
\end{proof}

The proof of Proposition \ref{ref:prop1} 
implies that the orbits of $U(3,q^2)_{[u],[v]}$ on $\Omega'$ correspond to the cosets of $F_0^\times$ in $F^\times$. 
To enumerate the relations in \(X\), let \(\{b_5,\ldots,b_{q+4}\}\) 
be a complete set of coset representatives for $F^\times/F_0^\times$.
Further, fix for each \(i\in \{5,\ldots,q+4\}\) an element \(c_i\in F\) such that \(b_i+\overline{b_i}+c_i\overline{c_i}=0\).
Lastly, fix a nonzero \(a\in F\) 
such that \(a+\overline{a}=0\).
The AST \(X\) has two kinds of nontrivial relations.
There is one of the first type, namely 
\begin{equation}\label{eq:relation_4}
R_{4}=([u],[v],[au+v])^{U(3,q^{2})}.  
\end{equation}
The remaining \(q\) relations of \(X\) are 
\begin{equation}\label{eq:relation_i}
R_{i} = ([u],[v],[b_{i}u+v+c_{i}w])^{U(3,q^{2})},    
\end{equation} 
where \(i\in\{5,\ldots,q+4\}\).
The valencies of the nontrivial relations are then obtained as the following consequence of Equations (\ref{eq:orbit_type1_size}) and (\ref{eq:orbit_type2_size}).


\begin{corollary}[Table \(1\), \cite{bb}]
Let $X$ be the AST from the $2$-transitive action of $U(3,q^{2})$. 
Then the nontrivial valencies are $n_{4}=q-1$ and $n_{i}=q^{2}-1$ for $i\geq 5$.    
\end{corollary}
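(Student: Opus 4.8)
The plan is to read off both valencies from the orbit-size computations already performed in the proof of Proposition~\ref{ref:prop1}, using Remark~\ref{rem:AST-valencies-and-2-pt-stab} to translate orbit sizes into valencies. By that remark, each nontrivial valency \(n_i\) equals the size of the orbit of the two-point stabilizer \(U(3,q^2)_{[u],[v]}\) on \(\mathbb{P}C\setminus\{[u],[v]\}\) that corresponds to the relation \(R_i\). The relation \(R_4\) corresponds to \(\Omega_0\), on which the stabilizer acts transitively (as noted just before Proposition~\ref{ref:prop1}), and each \(R_i\) with \(i\ge 5\) corresponds to one of the \(q\) orbits on \(\Omega'\). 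Thus it suffices to count the elements of \(\Omega_0\) and of a single orbit on \(\Omega'\).

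To obtain \(n_4\), I would count \(\Omega_0=\{[au+v]:a\in F^\times,\ a+\overline{a}=0\}\) directly. The condition \(a+\overline{a}=0\) says precisely that \(a\in\operatorname{Ker}T\), so \(a\) ranges over \(\operatorname{Ker}T\setminus\{0\}\). Comparing the coefficient of \(v\) shows that the map \(a\mapsto[au+v]\) is injective, so no line is counted twice; hence \(|\Omega_0|=|\operatorname{Ker}T|-1\). Equation~(\ref{eq:orbit_type1_size}) gives \(|\operatorname{Ker}T|=q\), and therefore \(n_4=q-1\).

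For \(i\ge 5\), the valency \(n_i\) is the size of an orbit of the stabilizer on \(\Omega'\), and Equation~(\ref{eq:orbit_type2_size}) records that every such orbit contains exactly \(q^2-1\) isotropic lines; hence \(n_i=q^2-1\). I do not anticipate any real obstacle, as the result is essentially bookkeeping built on the counts already established. The only points meriting care are the correct matching of each relation to its two-point-stabilizer orbit via Remark~\ref{rem:AST-valencies-and-2-pt-stab}, and the injectivity of the parametrization \(a\mapsto[au+v]\), which guarantees that \(\Omega_0\) is not overcounted.
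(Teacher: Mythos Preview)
Your proposal is correct and follows essentially the same approach as the paper, which simply states that the valencies are a consequence of Equations~(\ref{eq:orbit_type1_size}) and~(\ref{eq:orbit_type2_size}). You supply a bit more detail than the paper does---namely the explicit invocation of Remark~\ref{rem:AST-valencies-and-2-pt-stab} and the injectivity of \(a\mapsto[au+v]\)---but the argument is the same.
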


We may sharpen the description of the relations of the AST from \(U(3,q^2)\) when the underlying field has even characteristic.
\begin{remark}
Let \(q\) be a power of two, \(X\) be the AST from the action of \(U(3,q^2)\), and \(F\) be the field of cardinality \(q^2\). 
Fix a primitive element \(\alpha\) of \(F\) and let \(T:F\rightarrow F_0\) be given by \(T(x)=x+\overline{x}\). Since \(F_0^\times \subseteq \operatorname{Ker}T\), we may choose \(a=\alpha^{q+1}\) and \(b_i=\alpha^{(q+1)+i}\) for \(i\in\{4,\ldots,q+4\}\) in Equations (\ref{eq:relation_4}) and (\ref{eq:relation_i}).
%
%
\end{remark}

We proceed with the intersection numbers of the AST from \(U(3,q^2)\).

\begin{theorem}\label{thm:main}
Let \(q\) be a prime power, $U(3,q^{2})$ be the unitary group of \(3\times 3\) matrices over \(GF(q^2)\), $\mathbb{P}C$ be the set of isotropic lines in the 3-dimensional vector space over \(GF(q^2)\), and $X$ be the AST from the $2$-transitive action of $U(3,q^{2})$ on $\mathbb{P}C$.
For \(5\leq i,j,k,\ell\leq q+4\), the intersection number \(p_{ijk}^\ell\) of \(X\) is as listed in Table \ref{tab:intnum}.

\begin{center}
\begin{table}[H]
\begin{center}
\scalebox{.706}{
{
\begin{tabular}{|c|l|}\hline
1 & $p_{144}^{1}=p_{424}^{2}=p_{443}^{3}=q-1$ \\ \hline
2 & $p_{14i}^{1}=p_{1i4}^{1}=p_{42i}^{2}=p_{i24}^{2}=p_{4i3}^{3}=p_{i43}^{3}=0$ \\ \hline
3 & $p_{1ji}^{1}=
\begin{cases}
  q^{2}-1 & \text{if } R_{j}=([u],[v],[\overline{b_i}u+v+c_iw])^{U(3,q^2)},\\
			
      0 & \text{otherwise}.\\  
\end{cases}$ 
\\ \hline
4 & $p_{j2i}^{2}=
\begin{cases}
      q^{2}-1 & \text{ if }R_{j}=([u],[v],[b_i^{-1}u+v+(b_i^{-1}c_i)w])^{U(3,q^2)},\\
      0 & \text{otherwise}.\\
\end{cases}$ \\ \hline
5 & $p_{ji3}^{3}=
\begin{cases}
      q^{2}-1 & \text{ if }[\overline{b_i}u+v+c_iw]^{U(3,q^2)_{[u],[v]}}=[\overline{b_{j}^{-1}}u+v+(b_{j}^{-1}c_{j})w]^{U(3,q^2)_{[u],[v]}},\\
      0 & \text{otherwise}.\\
\end{cases}$\\ \hline
& For items 6 -- 8, fix $([u],[v],[au+v])\in R_{4}$.\\ \hline
6 & $p_{444}^{4}=q-2$ \\ \hline
7 & $p_{i44}^{4}=p_{4j4}^{4}=p_{44k}^{4}=p_{ij4}^{4}=p_{i4k}^{4}=p_{4jk}^{4}=0$ \\ \hline
8 & $p_{ijk}^{4}=
\begin{cases}
      q+1 & \text{ if }b_{j}(\overline{b}+a)^{-1}\in F_{0}^\times\text{ and }b_{i}b(\overline{b}+a)a^{-1}\in F_{0}^\times,\text{ where }([u],[v],[bu+v+cw])\in R_{k},\\
      0 & \text{otherwise}.\\
\end{cases}$ \\ \hline
& For items 9 -- 13, fix $([u],[v],[b_{\ell}u+v+c_{\ell}w])\in R_{\ell}$. \\ \hline
 9 & $p_{i44}^{\ell}=p_{4j4}^{\ell}=p_{44k}^{\ell}=p_{444}^{\ell}=0$ \\ \hline
10 & $p_{4jk}^{\ell}=
\begin{cases}
      1 & \text{ if }b_{j}(\overline{b}+b_{\ell}+\overline{c}c_{\ell})^{-1}\in F_{0}^\times\text{ and }b_{\ell}c=bc_{\ell}, \text{ where }([u],[v],[bu+v+cw])\in R_{k},\\
      0 & \text{otherwise}\\
\end{cases}$ \\ \hline
11 & $p_{i4k}^{\ell}=
\begin{cases}
      1 & \text{ if }b_{i}bb_{\ell}^{-1}(\overline{b}+b_{\ell}+\overline{c}c_{\ell})\in F_{0}^{\times}\text{ and }c=c_{\ell},\text{ where }([u],[v],[bu+v+cw])\in R_{k},\\
      0 & \text{otherwise}.\\
\end{cases}$ \\ \hline
12 & $p_{ij4}^{\ell}=
\begin{cases}
      1 & \text{ if }b_{i}(\overline{g}+b_{\ell})gb_{\ell}^{-1}\in F_{0}^\times\text{ and }b_{j}(\overline{g}+b_{\ell})^{-1}\in F_{0}^\times, \text{ where }([u],[v],[gu+v])\in R_{4},\\
      0 & \text{otherwise}.\\
\end{cases}$ \\ \hline
13 & $p_{ijk}^{\ell}=\left|\left\{
([u],[v],[bu+v+cw])\in R_{k}:b_{i}bb_{\ell}^{-1}(\overline{b}+b_{\ell}+\overline{c}c_{\ell}),b_{j}(\overline{b}+b_{\ell}+\overline{c}c_{\ell})^{-1}\in F_{0}^\times 
\right\} \right|$ \\ 
\hline
\end{tabular}
}

}
\end{center}
\caption{Intersection numbers \(p_{ijk}^\ell\) of the AST from $U(3,q^{2})$, where $5\leq i,j,k,\ell\leq q+4$.} 
\label{tab:intnum}

\end{table}    
\end{center}

\end{theorem}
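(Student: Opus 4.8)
The plan is to compute each intersection number $p_{ijk}^\ell$ directly from its definition as $|\Gamma_{ijk}^\ell(x,y,z)|$, using a single canonical representative of $R_\ell$ together with the explicit descriptions of the relations given in Equations (\ref{eq:relation_4}) and (\ref{eq:relation_i}). Since Condition $(ii)$ of Definition \ref{def:ast} guarantees that $p_{ijk}^\ell$ is independent of the chosen $(x,y,z)\in R_\ell$, I would fix the representative to be $([u],[v],[au+v])$ when $\ell=4$ and $([u],[v],[b_\ell u+v+c_\ell w])$ when $\ell\geq 5$; this is exactly the normalization announced in items 6 and 9 of Table \ref{tab:intnum}. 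With $x=[u]$ and $y=[v]$ fixed, the first two coordinate conditions defining $\Gamma_{ijk}^\ell$ become constraints of the form $([w'],[v],[z])\in R_i$ and $([u],[w'],[z])\in R_j$, where $w'$ ranges over isotropic vectors. The key simplification, carried over from the approach in \cite{bb}, is that every relation $R_t$ with $t\geq 4$ is the $U(3,q^2)$-orbit of a triple whose first two entries are $[u],[v]$; hence membership $([u],[v],[w'])\in R_t$ is governed entirely by which $F_0^\times$-coset the ``$u$-coordinate'' of $w'$ lives in, as established in the proof of Proposition \ref{ref:prop1}.

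The main technical engine is therefore a dictionary translating each of the three adjacency conditions on a candidate line $[w']=[su+tv+rw]$ into an algebraic condition on $s,t,r$ in the field $F$. For a condition like $([w'],[v],[z])\in R_i$, I would first apply a suitably chosen element of $U(3,q^2)$ that sends the ordered pair $([w'],[v])$ back to the standard pair $([u],[v])$ (or $([v],[u])$, then correct by the coordinate-swapping symmetry of Condition $(iii)$), compute the image of the third entry $[z]$ under this element, and read off the $F_0^\times$-coset of its $u$-coordinate. Carrying this out for all three cyclic positions produces precisely the paired membership conditions such as ``$b_j(\overline{b}+b_\ell+\overline{c}c_\ell)^{-1}\in F_0^\times$'' appearing in items 10--13. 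The normalizations $b+\overline{b}+c\overline{c}=0$ defining $\Omega'$ and the action formulas for the two-point stabilizer $U(3,q^2)_{[u],[v]}$ — which scales the $u$-coordinate by $F_0^\times$ and the $w$-coordinate correspondingly — are what make these cosets well-defined and let me recognize when two orbit labels coincide (as in the case distinctions of items 3--5, which are essentially the symmetry relations $R_j=R_i^\sigma$ forced by Condition $(iii)$).

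Once the dictionary is in place, the proof splits into the table's thirteen cases by the now-standard trivial-versus-nontrivial bookkeeping: whenever one of the indices forces a coordinate into $\operatorname{Ker}T$ (the $\ell=4$, valency $q-1$ regime) rather than $F\setminus\operatorname{Ker}T$ (the generic regime), the counting collapses, explaining the zeros in items 2, 7, and 9 and the small constants in items 1 and 6. The genuinely nontrivial counts are items 8, 10--13: here I would parametrize the admissible $[w']$ by the free coordinates subject to the isotropy equation $B(w',w')=0$, impose the two coset conditions simultaneously, and count solutions. In items 8 and 10--12 the two conditions together pin down $[w']$ up to the $(q+1)$-fold (or, after the further rigidity from $c=c_\ell$ or $b_\ell c=bc_\ell$, the exactly-$1$-fold) ambiguity coming from $|\operatorname{Ker}N|=q+1$ computed in Proposition \ref{ref:prop1}; item 13 is left as an explicit cardinality because no single coset condition decouples, so the count depends on the arithmetic of the chosen representatives.

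The hard part will be item 13 and, more generally, verifying that the two simultaneous coset membership conditions are \emph{independent} in exactly the cases where the table records a nonzero product like $q+1$ rather than collapsing to $0$ or to a smaller number. Concretely, I expect the main obstacle to be controlling the interaction between the isotropy constraint $b+\overline{b}+c\overline{c}=0$ and the two trace/norm-coset conditions: showing that, generically, fixing the $F_0^\times$-coset of the $u$-coordinate and the $F_0^\times$-coset of the ``inner product'' expression still leaves the $w$-coordinate free to range over a full coset of $\operatorname{Ker}N$ (giving the factor $q+1$), while in the degenerate sub-cases an extra linear relation such as $c=c_\ell$ forces uniqueness. Disentangling these requires a careful case analysis of when the map from $[w']$ to its pair of cosets is surjective with constant fiber size, and this is where the explicit field equations, rather than abstract orbit-counting, become indispensable.
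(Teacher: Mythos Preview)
Your proposal is correct and follows essentially the same route as the paper: fix the canonical representative $([u],[v],\cdot)$ of $R_\ell$, translate each of the three orbit-membership conditions on a candidate $[w']$ into an $F_0^\times$-coset condition on its coordinates, and then count. The paper only writes out line 8 in detail and offers one mechanical shortcut over your plan: rather than constructing an explicit element sending a pair like $([w'],[v])$ back to $([u],[v])$, it simply posits the existence of $\sigma\in U(3,q^2)$ with $\sigma(u,v,b_iu+v+c_iw)=(r\,w',\,sv,\,t(au+v))$ and reads off the field equations directly from $B(\sigma x,\sigma y)=B(x,y)$ on basis pairs; the exact count of $q+1$ is then obtained by a short contradiction showing that two solutions with distinct $u$-coordinates would force $b+\overline{b}=0$, which is the concrete incarnation of the ``independence'' step you flagged as the hard part.
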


\begin{proof} 
We only show that line 8 of Table \ref{tab:intnum} holds; the other lines of Table \ref{tab:intnum} can be shown by applying a similar approach. 
Let \(([u],[v],[au+v])\in R_4\). 
Observe that \(p_{ijk}^4\neq 0\) if and only if
\begin{equation}\label{eq:case-8}
\Gamma_{ijk}^4([u],[v],[au+v])\neq \emptyset.
\end{equation}
This is equivalent to the existence of \([\zeta]=[bu+v+cw]\) in \(([b_ku + v + c_kw])^{U(3,q^2)_{[u],[v]}}\) satisfying the following conditions.
\begin{enumerate}
	\item[C1.] There exist \(\sigma \in U(3,q^2)\) and \(r,s,t \in F^\times\) such that 
	\begin{equation}\label{eq:Case8-2}
		\sigma(u,v,b_i u + v +c_i w) = (r(bu+v+cw),sv,t(au+v)).
	\end{equation}
	\item[C2.] There exist \(\tau \in U(3,q^2)\) and \(r_1,s_1,t_1 \in F^\times\) such that 
	\begin{equation}\label{eq:Case8-3}
		\tau(u,v,b_j u + v +c_j w) = (r_1 u, s_1(bu+v+cw),t_1 (au+v)).
	\end{equation}  
\end{enumerate}
  Since \(\tau\in GL(3,q^2)\) is in \(U(3,q^2)\) if and only if \(B(\tau(x),\tau(y)) = B(x,y)\) for \(x,y\in\{u,v,w\}\), Condition C2 is equivalent to 
  \begin{equation}\label{eq:Case8-3-1}
0=\overline{s_1}r_1-1=s_1-t_1=c\overline{c} s_1 +b_jr_1 + bs_1 - at_1,
  \end{equation}
  for some \(r_1,s_1,t_1\in F^\times\). 
 In turn, this is equivalent to 
 \begin{equation}\label{eq:Case8-3-2}
 	b_j(\overline{b}+a)^{-1}\in F_0^\times.
 \end{equation}
  Indeed, Equation (\ref{eq:Case8-3-1}) implies \(b_j(\overline{b}+a)^{-1}\in F_0^\times\). 
 Conversely, if \(b_j(\overline{b}+a)^{-1}\in F_0^\times\), then we may choose \(s_1\in F^\times\) such that \(b_j=s_1 \overline{s_1}(\overline{b}+a)\).
 Equation (\ref{eq:Case8-3-1}) is then satisfied by taking \(t_1=s_1\) and \(r_1=\overline{s_1}^{-1}\). 

Similarly, Condition C1 holds if and only if 
\begin{equation}\label{eq:Case8-2-1}
b_i b(\overline{b}+a)a^{-1}\in F_0^\times.
\end{equation}
%
It remains to show that if \(p_{ijk}^4\neq 0\), then there are exactly \(q+1\) elements \([\zeta]\) in the orbit \(([b_ku + v + c_kw])^{U(3,q^2)_{[u],[v]}}\) such that Equations (\ref{eq:Case8-3-2}) and (\ref{eq:Case8-2-1}) are true.
To see this, fix \([\zeta]=[bu+v+cw] \in \Gamma_{ijk}^4([u],[v],[au+v]).\)
Recall that the elements of \(([b_ku + v + c_kw])^{U(3,q^2)_{[u],[v]}}\) are \([du+v+ew]\), where \(d \in b F_0^\times\), and \(e\in F^\times\) satisfies \(d+\overline{d}+e\overline{e}=0\).
For each of the \(q+1\) elements \(e\in F^\times\) satisfying \(e\overline{e}=c\overline{c}\), the element \([bu+v+ew]\) satisfies Equations (\ref{eq:Case8-3-2}) and (\ref{eq:Case8-2-1}).  
This accounts for \(q+1\) elements of \(\Gamma_{ijk}^4([u],[v],[au+v])\).
To complete the proof, we show that \([du+v+ew]\notin \Gamma_{ijk}^4([u],[v],[au+v])\) if \(b\neq d\). 
Indeed, let \(d=by\) for some \(1\neq y \in F_0^\times \). 
If \([du+v+ew] \in  \Gamma_{ijk}^4([u],[v],[au+v])\), then Equation (\ref{eq:Case8-2-1}) yields \(b_id(\overline{d}+a)a^{-1} \in F_0^\times\). 
Since \(b_ib(\overline{b}+a)a^{-1}\) is also in \(F_0^\times\), we obtain 
\begin{equation}\label{eq:case8-counting}
  y^{-1}(\overline{b}+a)(a+\overline{b}y)^{-1}=b_ib(\overline{b}+a)a^{-1}(b_id(\overline{d}+a)a^{-1})^{-1} =z,   
\end{equation}for some \(z\in F_0^\times\).
Since \(y\neq 1\), Equation (\ref{eq:case8-counting}) implies \(yz\neq 1\).  
Consequently, Equation (\ref{eq:case8-counting}) yields \(\frac{a}{\overline{b}}=\frac{1-y^2z}{zy-1}=\frac{\overline{a}}{b}=-\frac{a}{b}\).
This implies \(b+\overline{b}=0\), which is impossible.
\end{proof}


As a consequence of Theorem \ref{thm:main}, we obtain a minimal ternary subalgebra generated by the hypermatrix \(A_4\) of the AST from \(U(3,q^{2})\).

\begin{corollary} 
Let $X=\left\{R_{0},R_{1},R_{2},R_{3},\ldots,R_{q+4}\right\}$ be the AST from the $2$-transitive action of $U(3,q^{2})$ on $\mathbb{P}C$, where the relations are as given in Equations (\ref{eq:relation_4}) and (\ref{eq:relation_i}). 
If $\left\{A_{0},A_{1},A_{2},A_{3},\ldots A_{q+4}\right\}$ is the set of corresponding adjacency hypermatrices of \(X\), 
then $\operatorname{Span}\left\{A_{4}\right\}$ is a ternary subalgebra of 
$\operatorname{Span}\left\{A_{4},\ldots A_{q+4}\right\}$.  
\end{corollary}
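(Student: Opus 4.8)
The plan is to show that the one-dimensional space $\operatorname{Span}\{A_4\}$ is closed under the ternary product, which by Theorem \ref{thm:ternary-subalgebra} reduces to verifying that $A_4 A_4 A_4$ is a scalar multiple of $A_4$. Since $A_4 A_4 A_4 = \sum_{\ell=0}^{q+4} p_{444}^\ell A_4^{(\ell)}$, where I abbreviate the product's expansion via the structure constants, it suffices to prove that $p_{444}^\ell = 0$ for every $\ell \neq 4$ and that $p_{444}^4$ is the (nonzero) scalar $q-2$ recorded in line 6 of Table \ref{tab:intnum}.

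First I would dispatch the trivial indices $\ell \in \{0,1,2,3\}$. For $\ell=0$, a triple in $R_0$ has the form $(x,x,x)$, and one checks directly from Remark \ref{rem_given-int-nums} and the definition of $\Gamma_{444}^0$ that no witness $w$ can simultaneously place the three modified triples into the valency-$(q-1)$ relation $R_4$; more efficiently, I would invoke the vanishing relations already tabulated in Remark \ref{rem_given-int-nums}, which force $p_{ijk}^\ell$ with $i,j,k \geq 4$ to be zero when $\ell \in \{1,2,3\}$, and handle $\ell = 0$ by the structure of $R_0$. Next, for the nontrivial indices $\ell \in \{5,\ldots,q+4\}$, I would appeal to line 9 of Table \ref{tab:intnum}, which states $p_{444}^\ell = 0$ for such $\ell$; this is exactly the case $i=j=k=4$ of the entry $p_{444}^\ell$ listed there. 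Finally, line 6 gives $p_{444}^4 = q-2$.

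Assembling these facts yields $A_4 A_4 A_4 = (q-2)\,A_4$, so the product of any three elements of $\operatorname{Span}\{A_4\}$ lies again in $\operatorname{Span}\{A_4\}$: for scalars $\lambda,\mu,\nu$, we have $(\lambda A_4)(\mu A_4)(\nu A_4) = \lambda\mu\nu (q-2) A_4 \in \operatorname{Span}\{A_4\}$. Together with the observation that $\operatorname{Span}\{A_4\} \subseteq \operatorname{Span}\{A_4,\ldots,A_{q+4}\}$, and that the latter is a ternary algebra by Theorem \ref{thm:ternary-subalgebra}, this establishes that $\operatorname{Span}\{A_4\}$ is a ternary subalgebra, as claimed.

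The main obstacle is simply the bookkeeping of confirming $p_{444}^\ell = 0$ across all the non-$4$ indices: one must be careful that the relevant vanishing entries are genuinely covered by Remark \ref{rem_given-int-nums} and by lines 9 and 6 of the table, rather than requiring a fresh computation. In particular, I would double-check that the index range $5 \leq i,j,k,\ell \leq q+4$ in Theorem \ref{thm:main} is what governs line 9, and that the degenerate product terms arising from $\ell \in \{0,1,2,3\}$ are all accounted for by the elementary parameters in Remark \ref{rem_given-int-nums}; no substantive new estimate is needed beyond reading off the already-established intersection numbers.
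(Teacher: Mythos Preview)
Your proposal is correct and follows essentially the same approach as the paper: both compute $A_4 A_4 A_4 = (q-2)A_4$ by invoking lines 6 and 9 of Table~\ref{tab:intnum}. The only difference is cosmetic---the paper restricts the expansion to $\sum_{\ell=4}^{q+4}$ from the outset (using the subalgebra statement in Theorem~\ref{thm:ternary-subalgebra}), whereas you start from $\sum_{\ell=0}^{q+4}$ and separately dismiss $\ell\in\{0,1,2,3\}$ via Remark~\ref{rem_given-int-nums}; also note your parenthetical ``nonzero'' is inaccurate when $q=2$, though this does not affect closure.
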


\begin{proof} By lines \(6\) and \(9\) in Table \ref{tab:intnum}, we have
\[A_{4}A_{4}A_{4}=\sum_{\ell=4}^{q+4}{p_{444}^{\ell}A_{\ell}}=p_{444}^{4}A_{4}+\sum_{\ell=5}^{q+4}{p_{444}^{\ell}A_{\ell}}=(q-2)A_{4}. \qedhere\]
\end{proof}


When the characteristic of the underlying field is two, several intersection numbers vanish.

\begin{corollary} Let \(q\) be a power of two and $X$ be the AST from the $2$-transitive action of $U(3,q^{2})$ on $\mathbb{P}C$. 
If the relations of \(X\) are as given in Equations (\ref{eq:relation_4}) and (\ref{eq:relation_i}), then
$$p_{k\ell k}^{\ell}=p_{\ell kk}^{\ell}=p_{kk\ell}^{\ell}=p_{\ell\ell\ell}^{\ell}=0,$$ 
whenever $k,\ell\in\{5,\ldots,m\}$ are distinct. 
Moreover, if \(5\leq i\leq q+4\) then $\operatorname{Span}\left\{A_{i}\right\}$ cannot be a nontrivial subalgebra of $\operatorname{Span}\left\{A_{4},\ldots A_{q+4}\right\}$. 
\end{corollary}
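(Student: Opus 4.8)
The plan is to establish the two vanishing claims by invoking the structure already encoded in Theorem \ref{thm:main}, specifically the conditions that make the relevant intersection numbers nonzero. The final statement concerns two things: first, that the four intersection numbers $p_{k\ell k}^\ell$, $p_{\ell kk}^\ell$, $p_{kk\ell}^\ell$, and $p_{\ell\ell\ell}^\ell$ vanish for distinct $k,\ell\in\{5,\ldots,q+4\}$ in even characteristic; and second, the consequence that no single $\operatorname{Span}\{A_i\}$ with $i\geq 5$ forms a nontrivial ternary subalgebra.

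First I would treat the vanishing claims. By Theorem \ref{thm:main}, each of the four intersection numbers in question is governed by conditions of the form ``a certain field element lies in $F_0^\times$'' together with an equation forcing a relationship between the coordinates. The key feature of even characteristic is that $a+\overline{a}=0$ becomes $a=\overline{a}$, which fuses the trace condition with membership in $F_0$; the proof of line 8 already exploited exactly this collapse when it derived the contradiction $b+\overline{b}=0$. So for each of the four cases I would specialize the relevant line of Table \ref{tab:intnum} (these are instances of lines 10--13 with the appropriate indices set equal), apply the characteristic-two simplifications to the defining conditions, and show that the simultaneous system cannot be satisfied by any admissible $[bu+v+cw]\in R_k$ (or, for $p_{\ell\ell\ell}^\ell$, that the counting set is empty). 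The mechanism should be the same in each case: the two $F_0^\times$-membership conditions, once combined, force a trace to vanish on an element known to have nonzero trace (since the defining condition of $\Omega'$ requires $b+\overline{b}\neq 0$), yielding a contradiction.

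Next I would deduce the subalgebra consequence. By Theorem \ref{thm:ternary-subalgebra}, $\operatorname{Span}\{A_i\}$ is a ternary subalgebra precisely when $A_iA_iA_i$ is a scalar multiple of $A_i$, i.e.\ when $p_{iii}^\ell=0$ for all $\ell\neq i$. Setting $k=i$ and letting $\ell$ range over the other indices, the first part gives $p_{iii}^\ell=p_{\ell\ell\ell}^\ell\big|_{\text{relabeled}}$-type vanishing, but more directly I would argue that for $\operatorname{Span}\{A_i\}$ to be closed we would need $A_iA_iA_i=\lambda A_i$, and expanding $A_iA_iA_i=\sum_\ell p_{iii}^\ell A_\ell$ shows this forces $p_{iii}^\ell=0$ for every $\ell\neq i$. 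I would then exhibit at least one $\ell\neq i$ (with $\ell\geq 4$) for which $p_{iii}^\ell\neq 0$, so the span fails to close. The cleanest route is to note that $p_{iii}^4\neq 0$ or some $p_{iii}^\ell\neq 0$ with $\ell\geq 5$ by a direct nonemptiness check of $\Gamma_{iii}^\ell$, which is where ``nontrivial'' enters: the trivial possibility $A_iA_iA_i=0$ is ruled out because $n_i=q^2-1\neq 0$.

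The main obstacle will be the second part, establishing that $\operatorname{Span}\{A_i\}$ genuinely fails to be a subalgebra rather than merely checking the four symmetric vanishings. Producing an explicit $\ell$ with $p_{iii}^\ell\neq 0$ requires either a dimension/counting argument (summing $p_{iii}^\ell$ over $\ell$ against the known valencies and the number of relations to show the mass cannot all sit on $\ell=i$) or a direct construction of an element of some $\Gamma_{iii}^\ell$ with $\ell\neq i$; the first part only tells us where the product does \emph{not} land, so I would combine it with the total-mass identity $\sum_\ell p_{iii}^\ell = (n_i)^2$ (which follows from summing the defining count over all triples) to force a nonzero term off the diagonal.
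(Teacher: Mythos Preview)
Your plan for the vanishing of the four intersection numbers is essentially the paper's approach: specialize the description in Table~\ref{tab:intnum}, combine the two $F_0^\times$-membership conditions, and use characteristic two to force the $b$-coordinate into $F_0$, contradicting membership in $\Omega'$. One small correction: since all indices involved are at least $5$, every case is an instance of line~$13$, not lines $10$--$13$.

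Where you go astray is the final statement. You are trying to prove something strictly stronger than what is asserted: you aim to show that $\operatorname{Span}\{A_i\}$ is not a subalgebra at all, by producing some $\ell\neq i$ with $p_{iii}^\ell\neq 0$. But the corollary only claims it is not a \emph{nontrivial} subalgebra, which is already forced by the single fact $p_{iii}^i=0$ (this is exactly the case $p_{\ell\ell\ell}^\ell=0$ from the first part). Indeed, if $\operatorname{Span}\{A_i\}$ were closed, then $A_iA_iA_i=p_{iii}^iA_i=0$, so the product is identically zero and the subalgebra is trivial. That is the paper's entire argument for the last sentence.

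Your extra machinery for the stronger claim is also not sound as written. The assertion that $n_i=q^2-1\neq 0$ rules out $A_iA_iA_i=0$ is unjustified: nonzero valency says nothing directly about the triple product. And the ``total-mass identity'' $\sum_\ell p_{iii}^\ell=(n_i)^2$ does not hold in general; summing $p_{ijk}^\ell$ over one of $i,j,k$ gives a count of pairs of conditions, but summing over $\ell$ has no such interpretation since each fixed $(x,y,z)$ lies in only one $R_\ell$. So even if you wanted the stronger conclusion, this route would not reach it. Drop the attempt to exclude the trivial case and simply invoke $p_{iii}^i=0$.
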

\begin{proof} We only show that $p_{k\ell k}^{\ell}=0$ if $k,\ell\in\{5,\ldots,m\}$ are distinct. 
Assume on the contrary that $p_{k\ell k}^{\ell}\neq 0$. 
Choose any $([u],[v],[bu+v+cw])\in R_{\ell}$ and suppose $([u],[v],[hu+v+ew])\in R_{k}$ satisfies the conditions of Equation $13$ of Table \ref{tab:intnum}. As a consequence, we have
$$hhb^{-1}b\in F_{0}.$$
It follows that $h^{2}\in F_{0}$. Since $h+\overline{h}+e\overline{e}=0$, we obtain
\begin{align*}
h+e\overline{e}&=-\overline{h}, \text{ and }\\
h&=-\overline{h}-e\overline{e}.
\end{align*}
Multiplying the corresponding sides of the above equations yields $h^{2}+he\overline{e}=\overline{h}^{2}+\overline{h}e\overline{e}$. Since $h^{2}=\overline{h}^{2}$, it follows that $h=\overline{h}$. That is, $h\in F_{0}^\times$. Since $\operatorname{char}F=2$ and $k\geq 5$, we obtain a contradiction.\\
The final statement holds since \(p_{iii}^{i}=0\) for \(5\leq i\leq q+4\)
\end{proof}

\end{document}